\theoremstyle{definition}
\newtheorem{definition}{Definition}[section]
\theoremstyle{plain}
\newtheorem{theorem}[definition]{Theorem}
\newtheorem{proposition}[definition]{Proposition}
\newtheorem{lemma}[definition]{Lemma}
\newtheorem{corollary}[definition]{Corollary}
\theoremstyle{remark}
\DeclareMathOperator{\Spec}{Spec}
\DeclareMathOperator{\Gal}{Gal}
\begin{document}
\title[The normal hull and commutator group]{The normal hull and commutator group for nonconnected group schemes}
\author{Giulia Battiston}
\date{\today}
\address{ Ruprecht-Karls-Universität Heidelberg -Mathematisches Institut -
Im Neuenheimer Feld 288
D-69120 Heidelberg }
\subjclass[2010]{14L10, 20G07}
\email{gbattiston@mathi.uni-heidelberg.de}
\maketitle

\begin{abstract} We prove that there is a well behaved notion of normal hull for smooth algebraic group schemes over a field and that the commutator group $(G,H)$ is well defined for $H\subset G$ smooth, even when both of them are not connected.
\end{abstract}

If $G$ is an abstract group and $H\subset G$ is a subgroup, then the \emph{normal hull} $H^G$ of $H$ in $G$ can be equivalently defined as the smallest normal subgroup of $G$ containing $H$ and as the subgroup of $G$ generated by all conjugates of $H$ in $G$, that is 
\[H^G=\langle \bigcup_{g\in G} gHg^{-1}\rangle.\]
If now $G$ is a group scheme over a field $k$ and $H\subset G$ is a subgroup scheme, then the two notions can be generalized as follows: one one hand we denote again by $H^G$ the smallest normal subgroup scheme of $G$ containing $H$, on the other we can define the fppf subgroup-sheaf of $G$ given by
\[\tilde{H}^G(R)=\{l\in G(R)\mid \exists R\to R' \text{ fppf }, l \in H(R')^{G(R')}=\langle \bigcup_{g\in G(R')} gH(R')g^{-1}\rangle\}\]
for every $k$-algebra $R$.
We want to prove that the two notions coincide, that is $H^G(R)=\tilde{H}^G(R)$ for every $k$-algebra $R$, under the assumptions that $H$ and $G$ are smooth algebraic group schemes.

\subsection*{Notations} For $H,K$ two abstract subgroups of an abstract group $G$, we denote by $(H,K)$ their commutator, that the subgroup of $G$ spanned by $[h,k]=hkh^{-1}k^{-1}$ for $h\in H$ and $k\in K$. With the abbreviation fppf we mean faithfully flat and \emph{locally} of finite presentation.

All group schemes are over $\Spec k$ for some (non necessarily algebraically closed) field $k$ and by \emph{algebraic} we mean of finite type over $\Spec k$.

\subsection*{Acknowledgments} I would like to thank Wolfgang Soergel for very useful discussions and for pointing out to me the existence of Proposition~\ref{comm:conn}.

\section{A theorem of Schur}
In this section all groups are abstract groups.
\begin{lemma}\label{lem:norm}
Let $H$ be a subgoup of a group $G$. Then
\begin{itemize}
\item[i)] The group $(G,H)$ is normal in $G$;
\item[ii)] For every inner automorphism $\sigma$ of $G$, we have that $(G,H)=(G,\sigma(H))$
\item[iii)] The normal hull of $H$ in $G$ is equal to $H\cdot (G,H)$.
\end{itemize}
\end{lemma}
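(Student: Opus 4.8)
The plan is to prove the three statements in order, since each feeds into the next.

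For (i), I would verify directly that $(G,H)$ is normal in $G$. The generators of $(G,H)$ are the commutators $[g,h]=ghg^{-1}h^{-1}$ with $g\in G$, $h\in H$. It suffices to show that conjugation by an arbitrary $x\in G$ sends each such generator into $(G,H)$. So I would compute $x[g,h]x^{-1}=[xgx^{-1},\, xhx^{-1}]$, which is a commutator of an element of $G$ with the element $xhx^{-1}$. The obstacle here is that $xhx^{-1}$ need not lie in $H$, so $[xgx^{-1},xhx^{-1}]$ is not manifestly a generator of $(G,H)$. To fix this I would use the commutator identity expressing $[a,bc]$ or equivalently writing $xhx^{-1}=h\cdot[x^{-1},h^{-1}]^{-1}$-type manipulations; concretely, $[g', xhx^{-1}]$ can be rewritten using $xhx^{-1}h^{-1}\in(G,H)$ together with $[g',h]\in(G,H)$ via the identity $[g',ab]=[g',a]\,a[g',b]a^{-1}$, so that everything decomposes into conjugates of generators of $(G,H)$, which we already know lie in $(G,H)$ once normality is established. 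Since this is a bootstrapping argument, the cleanest route is to show simultaneously that $(G,H)$ is generated by the conjugates $x[g,h]x^{-1}$ and that these all lie in $(G,H)$; I expect this to be the main technical nuisance of part (i).

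For (ii), let $\sigma(y)=xyx^{-1}$ be conjugation by $x\in G$. I would show the two subgroups $(G,H)$ and $(G,\sigma(H))=(G,xHx^{-1})$ coincide by comparing generating sets. A generator of $(G,xHx^{-1})$ has the form $[g, xhx^{-1}]$ with $g\in G$, $h\in H$. Using that $(G,H)$ is normal (part (i)) and the commutator identities relating $[g,xhx^{-1}]$ to conjugates of $[g',h]$, I would show each such generator lies in $(G,H)$, and symmetrically. Here part (i) does the heavy lifting: once $(G,H)$ is normal, the ambiguity coming from the element $xhx^{-1}\notin H$ is absorbed, because the relevant correction terms are conjugates of elements already in the normal subgroup $(G,H)$.

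For (iii), I would prove the two inclusions. First, $H\cdot(G,H)$ is a subgroup: $(G,H)$ is normal by (i), so the product of the subgroup $H$ with the normal subgroup $(G,H)$ is a subgroup, and it is normal in $G$ because it is stable under conjugation — indeed $xHx^{-1}\subset H\cdot(G,H)$ since $xhx^{-1}=(xhx^{-1}h^{-1})h\in (G,H)\cdot H=H\cdot(G,H)$, and $(G,H)$ is itself normal. Since $H\cdot(G,H)$ is a normal subgroup containing $H$, the minimal such subgroup satisfies $H^G\subseteq H\cdot(G,H)$. Conversely, $H^G$ is normal and contains $H$, so for every $g\in G$ and $h\in H$ the element $ghg^{-1}\in H^G$, whence $[g,h]=(ghg^{-1})h^{-1}\in H^G$; thus $(G,H)\subseteq H^G$ and therefore $H\cdot(G,H)\subseteq H^G$. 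Combining the two inclusions gives the claimed equality. I expect (iii) to be straightforward once (i) is in hand; the genuine work is the bootstrapping in (i).
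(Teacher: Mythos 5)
Your parts (ii) and (iii) are essentially sound and close to the paper's own argument, but part (i) --- which you correctly identify as the crux, since (ii) and (iii) lean on it --- contains a genuine circularity that your sketch does not resolve. Your decomposition of $[g',xhx^{-1}]$ via $[g',ab]=[g',a]\,a[g',b]a^{-1}$ with $a=[x,h]$, $b=h$ produces two terms: the term $a[g',h]a^{-1}$ is harmless (it is a conjugate of a generator by an element of the subgroup $(G,H)$ itself, hence lies in $(G,H)$ with no appeal to normality), but the term $[g',a]=[g',[x,h]]=\bigl(g'[x,h]g'^{-1}\bigr)[x,h]^{-1}$ contains a conjugate of a generator by an arbitrary $g'\in G$ --- exactly the membership you are trying to prove. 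Your own phrasing (``which we already know lie in $(G,H)$ once normality is established'') concedes this, and the proposed fix --- to show ``simultaneously'' that $(G,H)$ is generated by the conjugates $x[g,h]x^{-1}$ and that these lie in $(G,H)$ --- is not yet an argument: there is no induction variable or well-founded ordering along which the two claims reinforce each other, since expanding the conjugate of one generator calls for normality applied to an equally arbitrary conjugate of another generator.

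The missing idea is a sharper identity that never conjugates $h$ at all, so the question of $xhx^{-1}\notin H$ never arises. The paper conjugates only the $G$-entry of the commutator: for all $c,g\in G$ and $h\in H$,
\[
c[g,h]c^{-1}=[cg,h]\,[h,c],
\]
which one checks by expanding both sides, and $[h,c]=[c,h]^{-1}$ is the inverse of a generator of $(G,H)$. Thus the conjugate of any generator is a product of two elements of the generating set and their inverses, and normality follows in one line, with no bootstrapping; your normal form $x[g,h]x^{-1}=[xgx^{-1},xhx^{-1}]$ is the wrong one precisely because it moves $h$ out of $H$. With (i) proved this way, your (ii) and (iii) go through; for (ii) the cleanest route is again an exact identity, $[g,chc^{-1}]=c[c^{-1}gc,h]c^{-1}$, which exhibits each generator of $(G,\sigma(H))$ as a conjugate of a generator of $(G,H)$, hence an element of $(G,H)$ by (i), with symmetry (applying the same to $\sigma^{-1}$) giving equality. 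Your (iii) is correct as written and differs from the paper's only cosmetically: you invoke minimality of $H^G$ among normal subgroups containing $H$, while the paper uses that $H^G$ is generated by the conjugates $ghg^{-1}=[g,h]h$, all of which lie in $H\cdot(G,H)$.
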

\begin{proof}
For every $g,c\in G$ and $h\in H$ we have that $c[g,h]c^{-1}=[cg,h][h,c]$, which proves $(i)$. If $\sigma$ is the inner automorphism induced by $c$, we have that $[g,chc^{-1}]=[c(c^{-1}gc)c^{-1},chc^{-1}]=c[c^{-1}gc,h]c^{-1}$ which is in $(G,H)$ as the latter is normal hence $(G,\sigma(H))\subset (G,H)$, hence the equality in $(ii)$.
As $(G,H)$ is normal, the group $H\cdot (G,H)$ is well defined. As every $[g,h]$ is contained in the normal hull of $H$, we have the inclusion $H\cdot (G,H)\subset H^G$, on the other hand for every $h\in H$ and $g\in G$ we have that $ghg^{-1}=[g,h]h^{-1}$ is in $(G,H)\cdot H=H\cdot (G,H)$, hence the equality.
\end{proof}

A classical theorem of Schur states that if the center of a group $G$ has finite index in $G$, then the group of commutators is finite. We need a slight generalization of this theorem:
\begin{proposition} If $H$ is a subgroup of $G$ such that its centralizer $C_G(H)$ has finite index, then $(G,H)$ is finite.
\end{proposition}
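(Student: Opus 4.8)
The plan is to run the proof of Schur's theorem, replacing the center $Z(G)$ by the centralizer $C := C_G(H)$ wherever possible. Recall the three moving parts of the classical argument: first one shows that there are only finitely many commutators, so that the commutator subgroup is finitely generated; then one observes that its intersection with the finite-index center is itself finitely generated and central; finally the transfer (Verlagerung) homomorphism into the center shows that this intersection has finite exponent, and a finitely generated abelian group of finite exponent is finite, whence the whole commutator subgroup is finite. I would try to reproduce each of these steps for $(G,H)$.

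The input coming from the hypothesis is handled as follows. Since $C$ centralizes $H$ elementwise, the commutator $[g,h]=ghg^{-1}h^{-1}$ is unchanged when $g$ is multiplied on the right by an element of $C$; thus $[g,h]$ depends on $g$ only through the coset $gC$, of which there are $n:=[G:C]<\infty$ many. Choosing representatives $g_1,\dots,g_n$, the group $(G,H)$ is generated by the images of the $n$ maps $h\mapsto[g_i,h]$. The same hypothesis controls $H$ internally: $Z(H)=H\cap C$ has index at most $n$ in $H$, so classical Schur already gives that $(H,H)$ is finite; and applying the same reasoning to the normal hull $H^G=\langle\bigcup_{g} gHg^{-1}\rangle$, whose centralizer is the intersection of the finitely many conjugates of $C$ and hence again of finite index, one gets that $(H^G,H^G)$ is finite as well. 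By Lemma~\ref{lem:norm} the subgroup $(G,H)$ is normal in $G$ and contained in $H^G$, which is the natural ambient group in which to finish.

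I expect the main obstacle to be the remaining half of the first step: showing that each map $h\mapsto[g_i,h]$ has finite image, equivalently that the whole set of commutators $\{[g,h]\}$ is finite. The coset argument only disposes of the dependence on $g$; the dependence on $h$, as $h$ ranges over a possibly infinite $H$, is the delicate point, and it is here that the full strength of the hypothesis must be used rather than just the consequence $[H:Z(H)]<\infty$. Concretely one knows that every $h\in H$ has at most $n$ conjugates in $G$ (because $C_G(h)\supseteq C$), and the real task is to convert this boundedness of conjugacy classes into finiteness of the commutator set; note that finiteness of $(H^G,H^G)$ alone does \emph{not} bound $(G,H)$, since a commutator $[g,h]$ with $g\notin H^G$ need not lie in $(H^G,H^G)$, so one genuinely has to track the $n$ conjugation actions of $G$ on $H$ together with the central part. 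Granting the finiteness of the commutator set, $(G,H)$ becomes finitely generated, its intersection with the finite-index center $Z(H^G)$ is finitely generated and central, and the exponent of that intersection is then pinned down exactly as in Schur's theorem, giving finiteness of $(G,H)$.
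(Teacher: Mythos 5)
You have located the fatal point exactly, but the verdict is worse than a missing lemma: the finiteness of the commutator set cannot be proved, because the Proposition as stated is false. Let $V$ be any infinite abelian group, say $V=\Z$, put
\[G=(V\times V)\rtimes\langle s\rangle,\qquad s^2=1,\quad s(a,b)s^{-1}=(b,a),\]
and take $H=V\times\{0\}$. The abelian base $V\times V$ centralizes $H$, while any element $(a,b)s$ conjugates $(v,0)$ to $(0,v)$, so $C_G(H)=V\times V$ has index $2$ in $G$; it is even normal, so passing to the normal core changes nothing and $n=2$. Nevertheless
\[\bigl[(a,b)s,(v,0)\bigr]=(0,v)\cdot(v,0)^{-1}=(-v,v)\]
for every $v\in V$, so already for the single element $g=s$ the map $h\mapsto[g,h]$ is injective, the commutator set is infinite, and $(G,H)=\{(-v,v):v\in V\}\cong V$ is infinite. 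This example simultaneously satisfies all the auxiliary finiteness statements you correctly establish: every $h\in H$ has at most two conjugates, $H^G=V\times V$, and $(H,H)=(H^G,H^G)=\{1\}$. So your warning that those facts alone cannot bound $(G,H)$ is vindicated in the strongest possible way: nothing can, and the ``delicate point'' you isolate is where the statement itself breaks.

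The comparison with the paper is instructive: its proof stumbles at precisely the step you refused to wave through. It asserts that since $C_G(H)$ has finite index there are at most $n^2$ commutators, importing the coset count from the classical Schur argument; that count requires the finite-index subgroup to be central in all of $G$. For $z\in Z(G)$ one indeed has $[g,hz]=[g,h]$, but for $z\in Z(H)=H\cap C_G(H)$ one only gets
\[[g,hz]=[g,h]\cdot h[g,z]h^{-1},\]
and the error term $[g,z]$ has no reason to vanish, since elements of $C_G(H)$ need not commute with $g$; in the example above $Z(H)=H$ and this error term takes infinitely many values. Everything downstream in both arguments (the paper's rewriting identity \eqref{eq:oneless} and its $n^4$ bound on lengths of products, your finite-generation-plus-transfer ending) is sound only conditionally on this false first step. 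Be aware also that the defect propagates: the main theorem invokes this Proposition for $\bar G(k^s)$ and $\bar H(k^s)$, and the algebraic incarnation of the same example, $G=(\mathbb{G}_a\times\mathbb{G}_a)\rtimes\Z/2$ acting by the swap with $H=\mathbb{G}_a\times 0$, has $(G_0,H)=\{1\}$, so that quotienting changes nothing and the claimed finite index of $(G_0(k^s),H(k^s))^{G(k^s)}$ in $(G(k^s),H(k^s))$ fails there (the latter group is the infinite antidiagonal), even though the conclusion of the main theorem happens to hold in that case.
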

\begin{proof}
We follow the proof of Schur's theorem as presented in \cite[Prob.~5.21-5.24]{Dixon}. First notice that by Poincar\'e's theorem the normal core of $C_G(H)$ has finite index in $G$ as well, that we denote $n$. Hence for every $g\in G$ one has that $g^n\in C_G(H)$. In particular, if $g\in G$ and $h\in H$ we have that
\begin{equation}\label{eq:oneless}
\begin{split}
[g,h]^{n+1}&= [g,h][g,h]^n=ghg^{-1}h^{-1}[g,h]^n=ghg^{-1}[g,h]^nh^{-1}=\\
&=[g,h^2]h[g,h]^{n-1}h^{-1}=[g,h^2][hgh^{-1},h].
\end{split}
\end{equation}
Moreover, it is easy to see that the normal core of $C_G(H)$ is included into $C_G(\sigma(H))$, for every inner automorphism $\sigma$ of $G$. Therefore more in general the following holds: for every $h\in H$, $g\in G$ and $\sigma$ inner automorphism of $G$:
\begin{equation}
\label{eq:oneless}[g,\sigma(h)]^{n+1}=[g,\sigma(h)^2][\sigma(h)g\sigma(h)^{-1},\sigma(h)].
\end{equation}
As $C_G(H)$ has finite index, there are finitely many commutators $[g,h]$, actually there are at most $n^2$ commutators. Furthermore, there are at most $n^3$ commutators of the form $[g,\sigma(h)]$ with $h\in K$ and $\sigma$ an inner automorphism of $G$. By the previous lemma, $[g,\sigma(h)]\in (G,H)$, hence $(G,H)$ is spanned by the $[g,\sigma(h)]$.

 Then we claim that every element in $(G,H)$ can be written as a product of at most $n^4$ commutators of the form $[g,\sigma(h)]$: indeed assume that $c=c_1\cdot\dotsc\cdot c_r$ with $r>n^4$, then there is a commutator $\dot{c}$ occurring at least $n+1$ times, assume that $\dot{c}=c_i$, then 
\[c=\dot{c}\dot{c}^{-1}c_1\dot{c}\dot{c}^{-1}c_2\dot{c}\dotsc\dot{c}^{-1}c_{i-1}\dot{c}c_{i+1}\dotsc c_r.\]
As the conjugate of a commutator of the form $[g,\sigma(h)]$ for some  inner automorphism $\sigma$ is again a commutator of the same form, we can assume that $c=\dot{c}^{n+1} c_{n+2}\dotsc c_r$ but by \eqref{eq:oneless} this means that we can write it as a product of $r-1$ commutators of the form $[g,\sigma(h)]$ and by induction we have that we can always assume that $r\leq n^4$.
\end{proof}

\section{Commutators and normal hulls}

The following is a classical result:

\begin{proposition}[{\cite[$VI_B$,§7 Prop.~7.1]{SGA3} or \cite[II,§5,Prop.~4.9]{DemGabr}}] \label{comm:conn}Let $G$ be a group scheme locally of finite type. Let $H, K$ two smooth subgroup schemes, with $H$ irreducible and $K$ of finite type. Then there exists a unique smooth irreducible subscheme of $G$ such that for any $k$-algebra $R$,
\[(H,K)(R)=\{ g\in G(R)\mid \exists R\to R' \text{ fppf },g\in (H(R'),K(R'))\},\]
in particular for any $k'$ algebraically closed
\[(H,K)(k')=(H(k'),K(k')).\]
Moreover, there exist $n$ such that every element in  $(H,K)$ is the product of at most $n$ commutators.
\end{proposition}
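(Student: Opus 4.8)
The plan is to realize $(H,K)$ as the connected subgroup generated by the images of commutator maps, and to reduce everything to the classical theory of subgroups of an algebraic group generated by a family of irreducible subvarieties passing through the identity. First I would set up the relevant word maps: define $\phi\colon H\times K\to G$ by $(h,k)\mapsto[h,k]=hkh^{-1}k^{-1}$ and likewise $\phi^-(h,k)=[h,k]^{-1}$. Since $K$ is of finite type it has finitely many connected components, and since $H$ is irreducible the connected components of $H\times K$ are exactly the products $H\times C$ with $C$ ranging over the components of $K$; each is irreducible because $H$ and $C$ are smooth and connected. The crucial point is that, although $K$ need not be connected, \emph{every} such component maps through the identity: $\phi(e,k)=[e,k]=e$ and $\phi^-(e,k)=e$ for all $k$, so $e\in\phi(H\times C)\cap\phi^-(H\times C)$ for each $C$. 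This is precisely where the hypothesis that $H$ is connected enters (so that $e\in H$ lies in each factor), while $K$ is allowed to be disconnected. One may base change to $\bar{k}$ for the geometric constructions and descend the result at the end, since the subgroup generated is canonical and hence Galois-stable.

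Next I would invoke the generation lemma for algebraic groups (Chevalley--Borel; see \cite{SGA3}) applied to the finite family of maps $\phi|_{H\times C}$, $\phi^-|_{H\times C}$ from irreducible schemes through $e$. Concretely, one considers the closures of the images of the iterated product maps $\phi(H\times C_{i_1})^{\pm}\cdots\phi(H\times C_{i_m})^{\pm}$; these are irreducible, their dimensions are nondecreasing and bounded by $\dim G$, hence stabilize. Using Chevalley constructibility and the standard fact that two dense constructible subsets of an irreducible variety have product equal to the whole variety, the stabilized closure $M$ is shown to be a closed subgroup scheme, and every point of $M$ is a product of at most $n$ of the generating commutators for some fixed $n$. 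I would set $(H,K):=M$; uniqueness of the smooth irreducible subgroup representing the sheaf then follows because any such subgroup must contain all commutators and agree with $M$ on $\bar k$-points. This generation step is the conceptual heart of the argument, though it is classical.

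It remains to match $M$ with the stated fppf sheaf and to secure smoothness. Let $w\colon\mathcal{W}\to M$ be the length-$n$ word map furnished above; it is dominant, so by generic flatness (using that $M$ is reduced) it is faithfully flat over a dense open $V\subseteq M$. The multiplication map $V\times V\to M$, $(v_1,v_2)\mapsto v_1v_2$, is smooth and surjective, hence fppf, because two dense opens of the irreducible $M$ always meet after a translate; composing, the length-$2n$ word map $w^{-1}(V)\times w^{-1}(V)\to M$, $(\mathbf{w}_1,\mathbf{w}_2)\mapsto w(\mathbf{w}_1)w(\mathbf{w}_2)$, is faithfully flat and surjective onto $M$. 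From this faithfully flat word map both halves of the sheaf identity follow: any product of commutators in $(H(R'),K(R'))$ lands in $M(R')$ and hence, $M$ being a subscheme, descends to $M(R)$; conversely any $g\in M(R)$ lifts along the fppf cover to a point of the source after a base change $R\to R'$, that is, to an honest product of commutators in $(H(R'),K(R'))$. This simultaneously yields the bound (at most $2n$ commutators) and, specializing to $R=R'=\bar{k}$ with no cover needed, the identity $(H,K)(\bar k)=(H(\bar k),K(\bar k))$.

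The delicate point to emphasize is this faithful flatness step, since it is what converts the set-theoretic generation statement into the precise fppf-sheaf description and the numerical bound, and it is also where smoothness over a possibly imperfect $k$ is secured: as $\mathcal{W}$ is smooth over $k$ and the word map onto $M$ is faithfully flat, the structure sheaf of $M$ injects into that of a geometrically reduced scheme, so $M$ is geometrically reduced; a geometrically reduced group scheme of finite type over a field is smooth, completing the construction. I expect the main obstacle to lie here rather than in the generation lemma, whose core I would cite as classical.
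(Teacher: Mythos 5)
The paper offers no internal proof of this proposition --- it is quoted as classical, with references to SGA3 and Demazure--Gabriel --- so the comparison is really against those sources, and your reconstruction is in substance the standard argument found there: the Chevalley-style generation lemma applied to the commutator maps on $H\times C$ for the finitely many components $C$ of $K$ (each passing through $e$ because $H$ does), followed by the generic-flatness/fppf word-map trick to identify the resulting subgroup with the fppf sheaf and to extract the bound on word length; this is exactly the technique the paper itself reuses, citing \cite[II,\S5,Prop.~4.8]{DemGabr}, in the proof of its main theorem. The one step you should tighten is the descent: over an imperfect field, descending from $\bar k$ via ``Galois-stability'' does not work as stated, because $\Aut(\bar k/k)$ has fixed field the perfect closure of $k$, not $k$ itself; the standard fix is to run the construction over the separable closure $k^s$ (the components of $K_{k^s}$ are geometrically irreducible since $K$ is smooth, so the generation lemma applies there and commutes with the base change to $\bar k$) and then use genuine Galois descent along $k^s/k$ --- which is precisely how the paper handles the analogous descent in its main theorem, invoking \cite[$VI_B$,\S7 Lemma~7.7]{SGA3} and the density of $k^s$-points in smooth schemes.
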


Building on this result, we can prove the main result of this paper:

\begin{theorem}
Let $G$ be a smooth algebraic group scheme and $H\subset G$ be a smooth subgroup scheme, then the group functor $\tilde{H}^G(R)$ is representable by a smooth subgroup scheme of $G$, namely $H^G$.
\end{theorem}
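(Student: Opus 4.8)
The plan is to reduce the whole statement to the construction of the commutator group $(G,H)$ as a smooth normal subgroup scheme, and then to build the latter by separating its connected part, handled by Proposition~\ref{comm:conn}, from its group of components, handled by our generalization of Schur's theorem. First I would invoke Lemma~\ref{lem:norm}, whose statement is about abstract groups but applies verbatim to the groups $G(R')$ of $R'$-points: on points one has $H^G=H\cdot(G,H)$ and $(G,H)$ is normal. Thus, granting a smooth normal subgroup scheme $(G,H)\subseteq G$ with the fppf-commutator property
\[
(G,H)(R)=\{g\in G(R)\mid \exists\, R\to R'\text{ fppf},\ g\in(G(R'),H(R'))\},
\]
the product $H\cdot(G,H)$ is a smooth subgroup scheme (the second factor being normal), it contains $H$, and a normality/minimality check on $R'$-points shows it is the smallest normal subgroup scheme containing $H$, i.e.\ $H^G$; the same fppf-locality, together with Lemma~\ref{lem:norm}(iii), identifies its functor of points with $\tilde{H}^G$. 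So it suffices to construct $(G,H)$.

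For the connected part, I would set $N_0$ to be the smallest normal subgroup scheme of $G$ containing $(G^0,H)$. By Proposition~\ref{comm:conn} applied to the irreducible $G^0$ and the finite type $H$, the group $(G^0,H)$ is a smooth connected subgroup scheme, and its conjugates are $c(G^0,H)c^{-1}=(G^0,cHc^{-1})$, again smooth connected subschemes through the identity; hence $N_0$, being generated by this conjugation-stable family of irreducible subvarieties through $e$, is a smooth connected normal subgroup scheme, and it is already generated by finitely many of them. Crucially, Lemma~\ref{lem:norm}(ii) gives $(G^0,cHc^{-1})\subseteq(G,cHc^{-1})=(G,H)$, so that $N_0\subseteq(G,H)$.

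The heart of the argument is to control the quotient. Writing $q\colon G\to\bar{G}:=G/N_0$ and $\bar{H}:=q(H)$, the inclusion $(G^0,H)\subseteq N_0$ forces $(\bar{G}^0,\bar{H})=q(G^0,H)$ to be trivial, i.e.\ the identity component $\bar{G}^0$ centralizes $\bar{H}$; since $\bar{G}/\bar{G}^0\cong G/G^0$ is finite, the centralizer $C_{\bar{G}}(\bar{H})$ has finite index. Our generalization of Schur's theorem then shows that $(\bar{G},\bar{H})=(G,H)/N_0$ is finite and generated by a bounded number of commutators. Consequently $(G,H)$ is an extension of a finite group by the smooth connected $N_0$, hence a smooth algebraic subgroup scheme with $(G,H)^0=N_0$; as the whole construction is $\Gal(\bar{k}/k)$-equivariant it descends to $k$, and the boundedness furnished by Schur's theorem, combined with the fppf-commutator property of $N_0$ coming from Proposition~\ref{comm:conn}, yields the displayed fppf description of $(G,H)$.

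The hard part will be exactly this last reduction to a finite group: everything hinges on the observation that after quotienting by the connected commutator $N_0$ the possibly disconnected group $\bar{H}$ becomes centralized by the finite-index subgroup $\bar{G}^0$, which is precisely the hypothesis of our Schur-type proposition and is the reason that statement was needed. The remaining points—finite generation of $N_0$ by conjugates, Galois descent, and the passage between scheme-theoretic finiteness and the fppf sheaf condition via the commutator bound—are then expected to be routine.
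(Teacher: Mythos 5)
Your overall architecture coincides with the paper's: build the connected part of $(G,H)$ out of Proposition~\ref{comm:conn}, pass to the quotient in which the image of the identity component centralizes the image of $H$, invoke the Schur-type proposition to conclude the component part is finite, descend to $k$ by Galois equivariance, and finish with $H^G=H\cdot(G,H)$ via Lemma~\ref{lem:norm}(iii). Where you genuinely differ is the construction of the connected piece: you generate $N_0$, the normal hull of $(G_0,H)$, directly as the group generated by the conjugation-stable family $(G_0,cHc^{-1})$ of irreducible subvarieties through $e$, whereas the paper first replaces $H$ by $H\cdot(H,G_0)$ --- showing this subgroup is normalized by $G_0$ and has the same normal hull and the same $\tilde{H}^G$ --- and then writes the hull as the product $(G_0,H)\cdot(G,(G_0,H))$ of two subgroups, each representable by Proposition~\ref{comm:conn}, using Lemma~\ref{lem:norm}(iii). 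Your variant is workable (generation by families of irreducible subvarieties is in the cited Demazure--Gabriel reference), but the paper's reduction is not mere streamlining, and dropping it creates the one real gap in your write-up.

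The gap sits precisely in the step you declare routine: the fppf description of $(G,H)$, i.e.\ the representability of the commutator functor. First, $N_0$ is a normal hull, not the commutator of two subgroup schemes, so Proposition~\ref{comm:conn} does not directly endow it with any ``fppf-commutator property''; that property must be proved, and the paper proves it (for all of $(G,H)$ at once) by exhibiting a dominant map $(G\times H)^N\to(G,H)$ and running the flatness argument of \cite[II,\S5,Prop.~4.8]{DemGabr}. For that map to exist you need a uniform bound on the number of commutators \emph{of the form} $[g,h]$ with $g\in G$, $h\in H$ needed to express every element of $(G,H)$. But your $N_0$ is built from commutators $[g_0,chc^{-1}]$ whose second entry lies in $cHc^{-1}$, not in $H$, so the bounds that Proposition~\ref{comm:conn} gives for the conjugates $(G_0,cHc^{-1})$ are bounds on words of the wrong shape. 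This is exactly what the paper's replacement $H\rightsquigarrow H\cdot(H,G_0)$ repairs: once $H$ is normalized by $G_0$ one has $(G_0,H)\subset H$, so every commutator that appears --- in the hull $(G_0,H)\cdot(G,(G_0,H))$ and in the finitely many coset representatives produced by the Schur argument --- is of the form $[g,h]$ with $h\in H$, giving the bound $N=n_1+n_2+d$. Your route can be salvaged without the replacement by invoking the identity $c[g,h]c^{-1}=[cg,h][h,c]$ of Lemma~\ref{lem:norm}(i) to rewrite each $[g_0,chc^{-1}]=c[c^{-1}g_0c,h]c^{-1}$ as a product of two commutators of the correct form, but either this observation or the paper's reduction has to be made explicitly: it is the hinge on which representability turns, not a routine verification.
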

\begin{proof} 
By Proposition~\ref{comm:conn} $(H,G_0)$ is a closed smooth subscheme of $G$ and represents the functor
\[(H,G_0)(R)=\{ g\in G(R)\mid\exists R\to R' \text{ fppf },g\in (H(R'),G_0(R'))\}.\]
It is easy to see that $H$ normalizes $(H,G_0)$, hence 
\[H\cdot (H,G_0)(R)=\{g\in G(R)\mid \exists R\to R' \text{ fppf },g\in H(R')\cdot (H,G_0)(R')\}\]
is represented by a closed subgroupscheme of $G$ which is smooth by \cite[$VI_B$,§7 Cor.~7.1.1]{SGA3}. Moreover $H(R)\subset (H\cdot (H,G_0))(R)\subset H(R)^{G(R)}$ therefore without loss of generality we can substitute $H$ with $H\cdot (H,G_0)$. The latter, though, is normalized by $G_0$: if $g_0,g_1\in G_0(R)$ and $l,h\in H(R)$
\[g_0 h[g_1,l] g_0^{-1}= \underbrace{g_0hg_0^{-1}}_{\in H\cdot (H,G_0)}\underbrace{[g_0g_1,l][l,g_0]}_{\in (H,G_0)}.\]

In particular we can assume without loss of generality that $H$ is normalized by $G_0$. We can apply the same arguments as before to $(G_0,H)$: it is smooth and irreducible, hence $(G,(G_0,H))$ is smooth and irreducible, and by Lemma~\ref{lem:norm} it is normal in $G$. In particular the group $(G_0,H)\cdot(G,(G_0,H))$ is well defined, but by Lemma~\ref{lem:norm}$(iii)$ the latter is simply $(G_0,H)^G$. Note that in particular there exists $n_1,n_2$ such that every element of $(G_0,H)^G$ is the product of $n_1$ commutators in $(G_0,H)$ and $n_2$ commutators in $(G,(G_0,H))$.

Fix now a separable closure $k^s$ of $k$ and consider the quotient 
\[\bar{G}(k^s)=G(k^s)/(G_0(k^s),H(k^s))^{G(k^s)}\]
 and let $\bar{H}(k^s)$ be the image of $H(k^s)$ in $\bar{G}(k^s)$. Then the image of $G_0(k^s)$ is in the centralizer $C_{\bar{G}(k^s)}(\bar{H}(k^s))$, in particular the latter has finite index. Hence $(\bar{G}(k^s),\bar{H}(k^s))=(G(k^s),H(k^s))/(G_0(k^s),H(k^s))^{G(k^s)})$ is finite, that is \[(G_0(k^s),H(k^s))^{G(k^s)}\subset (G(k^s),H(k^s))\] has finite index $d$. As $(G_0(k^s),H(k^s))^{G(k^s)}$ is closed, so is $(G(k^s),H(k^s))$. In particular it corresponds to the closed points of a normal subgroup of $G$, defined over $k^s$, that we denote $(G,H)$. Note that $(G,H)(k^s)=(G(k^s),H(k^s))$, hence it is stable under $\Gal(k^s/k)$, that is $(G,H)$ is defined over $k$ (see also \cite[$VI_B$,§7 Lemma~7.7]{SGA3}, noting that one only uses that $k$ is algebraically cosed to ensure that $k$-points are dense).

To show that $(G,H)$ represents the functor
\[(G,H)(R)=\{ g\in G(R)\mid \exists R\to R' \text{ fppf },g\in (G(R'),H(R'))\},\]
note that every element of $(G,H)$ is the product of at most $N=n_1+n_2+d$ commutators of the form $[g,h]$, but then we can use the methods of \cite[II,§5,Prop.~4.8]{DemGabr}: $(G\times H)^N\to (G,H)$ is dominant hence flat over some open $U\subset(G\times H)^N$. Hence
$U\times U\to (G,H)\times (G,H)\xrightarrow{m} (G,H)$ is flat and surjective. Let $R$ be any $k$-algebra and $g:\Spec R\to (G,H)$ a $R$-point. Let $X=\Spec R\times_{(G,H)}(U\times U)$, then the projection $X\to \Spec R$ is fppf, hence so is $\varphi:\Spec (\prod R_i)\to\Spec R$, where $\cup \Spec R_i$ is an open affine covering of $X$. Hence $g\circ\varphi\in (G,H)(\prod R_i)$ factors through $U\times U$, that is it it is a product of commutators in $(G(\prod R_i),H(\prod R_i))$. Conversely, $G\times H\to G$ given by $(g,h)$ to $hgh^{-1}g^{-1}$ factors through $(G,H)$, giving the reverse inclusion.

By Lemma~\ref{lem:norm} $(G,H)$ is normal in $G$ and $(G,H)\cdot H$ is a smooth subscheme of $G$ representing $\tilde{H}^G$, hence by minimality $H^G=(G,H)\cdot H$.
\end{proof}

\begin{corollary}
Let $G$ be an algebraic group scheme and let $H\subset G$ be a smooth subgroup scheme, then the functor
\[(H,G)(R)=\{ g\in G(R)\mid \exists R\to R' \text{ fppf },g\in (H(R'),G(R'))\}\]
is representable by a closed smooth subscheme of $G$.
\end{corollary}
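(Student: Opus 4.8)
The plan is to deduce the corollary from the Theorem, whose only extra hypothesis is the smoothness of $G$; so the whole game is to replace the possibly non-smooth $G$ by a smooth algebraic group without altering the commutator functor $(H,G)$. First I would reduce to the case where $k$ is algebraically closed. The functor $(H,G)$ is defined by the same formula over every $k$-algebra, so its formation is compatible with base change and is Galois-equivariant in exactly the way exploited in the proof of the Theorem. Hence, just as there, it is enough to produce a closed smooth subscheme of $G_{\bar k}$ representing $(H,G)_{\bar k}$ and to check that it is stable under $\Gal(\bar k/k)$, so that it descends to a closed smooth subscheme of $G$; the purely inseparable part of $\bar k/k$ is absorbed by fppf descent since the functor is intrinsic.

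Over the now perfect base field $\bar k$, the reduced subscheme $G_{\mathrm{red}}$ is a smooth closed algebraic subgroup of $G$, and since $H$ is smooth the inclusion $H\hookrightarrow G$ factors through $G_{\mathrm{red}}$, that is $H\subseteq G_{\mathrm{red}}$. I would then apply the Theorem to the smooth algebraic group $G_{\mathrm{red}}$ and its smooth subgroup $H$: this yields that $(G_{\mathrm{red}},H)$ is representable by a closed smooth normal subscheme of $G_{\mathrm{red}}$, hence of $G$, together with a uniform bound $N$ on the number of commutators needed to express any of its points. Granting the identification $(H,G)=(H,G_{\mathrm{red}})$, representability of $(H,G)$ then follows by transporting the dominant-hence-flat argument of the Theorem verbatim: the multiplication map out of $\big(G_{\mathrm{red}}\times H\big)^{N}$ is dominant onto the commutator, flat over a dense open, and one extracts an fppf cover of any prescribed $R$-point exactly as before, while the reverse inclusion comes from the map $(g,h)\mapsto [g,h]$ factoring through the commutator.

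The main obstacle is precisely the identification $(H,G)=(H,G_{\mathrm{red}})$, that is, showing that every commutator $[h,g]$ with $h\in H(R')$ and $g\in G(R')$ is, fppf-locally on $R'$, a product of commutators of $H$ with elements of $G_{\mathrm{red}}(R')$. This is the one place where the smoothness of the second factor is genuinely used in the Theorem, so the heart of the matter is to control the contribution of the infinitesimal directions of $G$ to the commutator: I would attempt to write $g$, fppf-locally, as a product of an element of $G_{\mathrm{red}}$ with an infinitesimal factor and to analyse the commutator of the smooth group $H$ with the latter. This step is delicate and is where the proof must concentrate its effort; the remaining bookkeeping (descent, closedness, normality via Lemma~\ref{lem:norm}, and the commutator count) is routine once the Theorem is in hand.
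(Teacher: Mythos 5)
Your proposal diverges from the paper's route, and both of its pillars have genuine problems. The step you defer as ``delicate'' --- the identification $(H,G)=(H,G_{\mathrm{red}})$ --- is not merely delicate: it is false. Take $G=\alpha_p\rtimes\mathbb{G}_m$ in characteristic $p$, with $\mathbb{G}_m$ acting on $\alpha_p$ by scaling, and $H=\mathbb{G}_m$. Then $G_{\mathrm{red}}=\mathbb{G}_m=H$, so $(H,G_{\mathrm{red}})$ is trivial, while the commutator computation $[(0,t),(a,t')]=((t-1)a,1)$ shows that $(H,G)$ is all of $\alpha_p$ (for $p>2$ take $t=2$; for $p=2$ pass to the fppf cover adjoining a primitive cube root of unity, so that some $t-1$ is invertible). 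The infinitesimal directions of $G$ genuinely enter the commutator as soon as $H$ acts nontrivially on them, so no argument can absorb them into $G_{\mathrm{red}}$. Note that the same example shows the corollary as literally printed is false for non-smooth $G$: there the functor $(H,G)$ is represented by $\alpha_p$, which is closed but not smooth. So the statement must be read with $G$ smooth --- the paper's own proof tacitly assumes this, since it invokes the machinery of the Theorem's proof (e.g.\ smoothness of $(G_0,H)$ via Proposition~\ref{comm:conn}), which requires $G_0$ smooth. Your attempt to honestly treat the non-smooth case was therefore doomed from the start; that is partly a defect of the statement, but it means your proof cannot be completed as announced.

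Second, even granting $G$ smooth (which makes your reduction to $G_{\mathrm{red}}$ vacuous), your appeal to the Theorem does not deliver what you claim. The Theorem's statement concerns the normal hull $\tilde H^G$, not the commutator; representability of the commutator functor is obtained only \emph{inside} its proof, and only after replacing $H$ by $H\cdot(H,G_0)$, i.e.\ only for subgroups normalized by $G_0$. Bridging from that special case to an arbitrary smooth $H$ is exactly the content of the corollary, and it is what the paper's proof supplies: by Lemma~\ref{lem:norm}, $(H(R),G(R))$ is normal in $G(R)$, hence $[g_0hg_0^{-1},g]\in(H(R),G(R))$ for all $g_0\in G_0(R)$, $h\in H(R)$, $g\in G(R)$, whence $(H(R),G(R))=(H(R)\cdot(G_0(R),H(R)),G(R))$ for every $R$; the right-hand functor is the one shown representable in the Theorem's proof. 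Your proposal never performs this step, so its core is circular: citing the Theorem for representability of $(G_{\mathrm{red}},H)$ with $H$ arbitrary assumes precisely the commutator statement that is to be proved.
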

\begin{proof}
It suffice to note that for every $R$ the inclusion $(H(R),G(R))\subset (H(R)\cdot (G_0(R),H(R)),G(R))$ is actually an equality: $H\cdot (G_0,H)$ is generated by all conjugates of $H$ under $G_0$, in particular it suffices to show that $[g_0hg_0^{-1},g]\in (H(R),G(R))$ for every $g_0\in G_0(R)$, $h\in H(R)$ and $g\in G(R)$. But by Lemma\ref{lem:norm} $(H(R),G(R))$ is normal in $G(R)$, in particular $[g_0hg_0^{-1},g]\in (H(R),G(R))$ if and only if $g_0^{-1}[g_0hg_0^{-1},g]g_0=[h,g_0^{-1}gg_0]\in (H(R),G(R))$. We showed in the proof of the main theorem that 
\[R\mapsto \{g\in G(R)\mid\exists R\to R' \text{ fppf },g\in H(R')\cdot (G_0(R'),H(R')),G(R'))\}\] is representable, hence so must be $(H,G)(R)$.
\end{proof}

\addcontentsline{toc}{section}{\refname}
\printbibliography
\end{document}